\documentclass[12pt, twoside]{amsart}
\usepackage{amsmath,mathtools}
\usepackage{amsmath, amsthm, amssymb, amsfonts, enumerate}
\usepackage[colorlinks=true,linkcolor=blue,urlcolor=blue]{hyperref}
\usepackage{dsfont}
\usepackage{color}
\usepackage{geometry}
\usepackage{todonotes}
\usepackage{epstopdf}
\usepackage{bbm}
\usepackage{geometry}
\usepackage[utf8]{inputenc}
\usepackage{bm}
\usepackage{amsfonts}
\usepackage{amsfonts}
\usepackage{textcomp}
\usepackage{amssymb}
\usepackage{float}
\usepackage{tikz}
\usepackage{epsfig}
\usepackage{amsmath}
\usepackage[english]{babel}
\usepackage{a4}

\usepackage{enumerate}
\usepackage{tcolorbox}
\usepackage{soul}
\newcommand{\stkout}[1]{\ifmmode\text{\sout{\ensuremath{#1}}}\else\sout{#1}\fi}

\newtheorem{theorem}{Theorem}[section]

\newtheorem{remark}[theorem]{Remark}
\newtheorem{hypothesis}[theorem]{Assumption}

\newtheorem{definition}[theorem]{Definition}

\usepackage{imakeidx}

\definecolor{red}{rgb}{1.0,0.0,0.0}

\definecolor{blu}{rgb}{0.0,0.0,1.0}

\definecolor{gre}{rgb}{0.03,0.50,0.03}

\usepackage[T1]{fontenc}

\title[Duality]{Duality methods in stochastic optimal control}

\date{\today}
\author[P. Bank]{Peter Bank}
\address{Peter Bank, Institut f\"{u}r Mathematik, Technische Universit\"{a}t Berlin, Berlin, Germany}
\email{\href{bank@math.tu-berlin.de}{bank@math.tu-berlin.de}}
\author[F. de Feo]{Filippo de Feo}
\address{Filippo de Feo, Institut f\"{u}r Mathematik, Technische Universit\"{a}t Berlin, Berlin, Germany}
\email{\href{defeo@math.tu-berlin.de}{defeo@math.tu-berlin.de}}

\numberwithin{equation}{section}

\begin{document}

\begin{abstract}
We prove two duality descriptions of the value function for a generic stochastic optimal problem. These descriptions also hold when the diffusion is controlled, a case left open by the literature so far.
\end{abstract}

\maketitle

\section{Introduction}
Optimal control problems of stochastic differential equations (SDEs)  have been extensively studied in the literature  using different approaches, see e.g. the classical references \cite{fleming_soner,nisio,pham2009,touzi2012,yong_zhou}.

By contrast, pathwise approaches that aim to dualize stochastic control problems have been successfully applied mostly in optimal stopping so far; see~\cite{rogers2002,belomestny_hildebrand_schoenmakers}.

  For optimal control, there is early work by \cite{burstein_davis} and, more recently, using rough paths techniques in \cite{diehl_friz_gassiat}. In these approaches, one  writes the value function in terms of suitable pathwise optimal problems involving penalization that have to be optimized to match the problem value. Observing that the pathwise problem may degenerate when the diffusion is controlled, these accounts refrained from considering this possibility further; see also the remarks in \cite{allan_cohen}. 
However, we observe that  in stochastic optimal control problems in discrete time, \cite{rogers2007} constructs suitable penalty terms without such hard restrictions on the system dynamics.  

Starting from these observations, we are led to investigate the duality approach for optimal control problems of SDEs with controlled diffusion, with the goal of settling the question: 
$$\textit{Is a duality representation possible when the diffusion is controlled?}$$
 We  show that, indeed, a duality representation holds, both when the value function is a classical solution of of the Hamilton-Jacobi-Bellman (HJB) equation (Theorem \ref{th:duality_C12}) and when this smoothness property is not assumed (Theorem \ref{th:duality_viscosity}). In fact, we provide two duality descriptions: the first involves a family of pathwise stochastic optimization problems that may degenerate when the diffusion part is controlled. In that case, however, the second description, which involves only a pointwise optimization, is still available.  

The manuscript is organized as follows. In Section \ref{sec:setting}, we introduce the stochastic optimal control problem. In Section \ref{sec_smooth}, we prove a duality theorem when the value function is smooth. In Section \ref{sec_visco}, we extend the duality when the value function is non-smooth.

\section{Setting of the problem}\label{sec:setting}
Let  $(\Omega, \mathcal{F}, \mathbb{P})$ be a complete probability space, 
 $\{W_s\}_{s \geq t}$  an $m$-dimensional standard Brownian motion on $(\Omega, \mathcal{F}, \mathbb{P})$, and let $\mathcal{F}_s^W$ be the augmented Brownian filtration. Given  a nonempty family  of  random variables $\{X_\pi\}$ defined on $(\Omega, \mathcal{F}, \mathbb{P})$, the essential supremum of $X_\pi$, denoted by $\operatorname*{esssup}_{\pi}X_\pi$, is a random variable $X^*$ satisfying: for all $\pi$, $X^* \geq X_\pi,$ a.s.; if $Y$ is a random variable such that $Y \geq X_\pi$ for all $\pi$, then $Y \geq X^*$, a.s.

On this setup, consider a controlled SDE of the form
\begin{equation}\label{eq:state_eq}
    dX_s=b(s,X_s,\pi_s)ds+\sigma (s,X_s,\pi_s)dW_s, \quad X_t=x\in \mathbb R^d,
\end{equation}
where 
$\pi\in \mathcal{U}:= \{\pi \colon \Omega \times [t,T]\to U \subset \mathbb R^h , \ \mathcal{F}_s^W-\textit{progressively measurable} \}$ is an admissible control, $b:[0,T]\times \mathbb R^d\times U\to \mathbb R^d,$ and $ \sigma :[0,T]\times \mathbb R^d\times U\to \mathbb R^{d\times m}$.  Throughout the manuscript, we assume that the SDE  \eqref{eq:state_eq} has a unique strong solution $ X_s^{t,x,\pi}$, for each $t\in [0,T],x\in \mathbb R^d, \pi \in \mathcal U$, i.e.  
\begin{equation}
    X_s^{t,x,\pi}=x+\int_t^s b(r,X_r^{t,x,\pi},\pi_r)dr+\int_t^s \sigma (r,X_r^{t,x,\pi},\pi_r)dW_r.
\end{equation}

The goal is to maximize, over all admissible controls $\pi \in \mathcal U,$ a cost functional of the form
$$J(t,x,\pi):=\mathbb E\left[\int_t^T l(s,X_s^{t,x,\pi},\pi_s) ds+g(X_T^{t,x,\pi}) \right].$$

Following the dynamic programming approach, we define the value function
$$V:[0, T] \times \mathbb R^d \to \mathbb R,\quad V(t,x):=\sup_{\pi \in \mathcal U}\mathbb E\left[\int_t^T l(s,X_s^{t,x,\pi},\pi_s) ds+g(X_T^{t,x,\pi}) \right],$$
where  $l:[0,T]\times \mathbb R^d\times U\to \mathbb R, $ $g :\mathbb R^d\to \mathbb R$ are the running reward and final reward, respectively. Throughout the manuscript, will assume that $J$ and $V$ are well-defined.

The HJB equation associated to the problem  is
\begin{equation}\label{eq:HJB}
    -\partial_t v-H(t,x,\partial_{x} v,\partial^2_{x^2} v)=0, \quad  \forall(t, x) \in(0, T) \times \mathbb R^d, \quad v(T, x) =g(x), \quad  \forall x \in \mathbb R^d ,
\end{equation}
where the current value Hamiltonian and the Hamiltonian are defined by
\begin{align*}
    &H^{cv}(t,x,p,Z,\pi)=b(t,x,\pi)  p+ \frac 1 2 \mathrm{Tr}[\sigma\sigma^T(t,x,\pi) Z] + l(t,x,\pi) ,\\
    &H(t,x,p,Z)=\sup _{\pi \in U}H^{cv}(t,x,p,Z,\pi)
\end{align*}
and are assumed to be continuous functions.
\section{Classical solutions}\label{sec_smooth}
In this section, we assume that the value function $V \in C^{1,2}((0, T) \times \mathbb{R}^d) \cap C((0, T] \times \mathbb{R}^d)$ is a classical solution of HJB and we prove a duality theorem.
\begin{definition}\label{def:classcal_sol}
    A function \( v \in C^{1,2}((0, T) \times \mathbb{R}^d) \cap C([0, T] \times \mathbb{R}^d) \) is said to be a classical subsolution (resp. supersolution, resp. solution) if $v(T, x) \leq g(x)$ (resp. $v(T, x) \geq g(x)$, resp. $v(T, x) = g(x)$) for all  $x \in \mathbb{R}^d$ and
    \[
    -\partial_t v(t,x) - H\left(t, x, D v(t,x), \partial^2_{x^2} v(t,x)\right) \leq 0 ,\quad (\textit{resp. }\geq 0, \textit{ resp. }= 0) \quad  \forall (t,x) \in (0, T) \times \mathbb{R}^d.
    \]
    We denote the sets of classical sub/supersolutions, respectively by $\mathcal S^-$, $\mathcal S^+$.
\end{definition}

For  $h \in C^{1,2}( (0,T)\times \mathbb R^d)$, we will denote 
\begin{align*}
M_{t,T}^{t,x,\pi,h} &:=h(T,X_T^{t,x,\pi})-h(t,x)-\int_t^T b(s,X_s^{t,x,\pi},\pi_s)  \partial_x h(s,X_s^{t,x,\pi})\\
&\quad + \frac 1 2Tr[ \sigma\sigma^T(s,X_s^{t,x,\pi},\pi_s) \partial^2_{x^2} h(s,X_s^{t,x,\pi})]  ds=\int_t^T \sigma(s,X_s^{t,x,\pi},\pi_s)\partial_x h(s,X_s^{t,x,\pi}) dW_s,
\end{align*}
where the last equality follows by Ito's formula.
\begin{theorem}\label{th:duality_C12}Assume that $V \in C^{1,2}((0, T) \times \mathbb{R}^d) \cap C((0, T] \times \mathbb{R}^d)$ is a classical solution of HJB. Then 
\begin{small}
\begin{align*}
V(t,x)&=\min_{h \in \mathcal H} V_1^h(t,x)=\min_{h \in \mathcal H} V_2^h(t,x),
\end{align*}
\end{small}
where $\mathcal H:=\{h \in C^{1,2}( (0,T)\times \mathbb R^d): h(x,T)=g(x)  \}$ and
\begin{align*}
& V_1^h(t,x):=\mathbb E\Big[\operatorname*{esssup}_{\pi \in {\mathcal U}}\left ( \int_t^T l(s,X_s^{t,x,\pi},\pi_s) ds+g(X_T^{t,x,\pi})  - M_{t,T}^{t,x,\pi,h} \right)   \Big]
\\
&\quad \quad \quad \quad \equiv h(t,x) + \mathbb E\Big[\operatorname*{esssup}_{\pi \in {\mathcal U}} \int_t^T\partial_s h(s,X_s^{t,x,\pi}) +H^{cv}(X_s^{t,x,\pi},\partial_{x} h(s,X_s^{t,x,\pi}) ,\partial^2_{x^2} h(s,X_s^{t,x,\pi}),\pi_s)    ds  \Big],\\
&V_2^h(t,x):=h(t,x) + \int_t^T \sup_{y\in \mathbb R^d}  \left[  \partial_s h(s,y) +  H(s,y,\partial_x h(s,y),\partial^2_{x^2} h(s,y)) \right] ds.
\end{align*}
\end{theorem}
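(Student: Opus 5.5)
The proof has two halves. First, $V\le V_1^h\le V_2^h$ for every $h\in\mathcal H$, so $V$ is a lower bound for both infima. Second, $h=V$ attains this bound, which makes both infima minima equal to $V$.

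\textbf{Weak duality.} Fix $h\in\mathcal H$ and $\pi\in\mathcal U$. By the definition of $M^{t,x,\pi,h}_{t,T}$ and It\^o's formula it is a stochastic integral. Assuming enough integrability that it is a true martingale (or, more carefully, after localizing along stopping times $\tau_n\uparrow T$ and passing to the limit), we have $\mathbb E[M^{t,x,\pi,h}_{t,T}]=0$. Hence
\[
J(t,x,\pi)=\mathbb E\Big[\int_t^T l\,ds+g(X_T)-M^{t,x,\pi,h}_{t,T}\Big]\le V_1^h(t,x),
\]
because the integrand is dominated a.s.\ by the essential supremum. Taking the supremum over $\pi$ gives $V\le V_1^h$.

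The equivalent form of $V_1^h$ follows by substituting the definition of $M$ together with $h(T,\cdot)=g$: the terms $g(X_T)$ cancel, and what remains is $h(t,x)+\int_t^T(\partial_s h+H^{cv})\,ds$ evaluated along the path. Bounding $H^{cv}(\cdot,\pi_s)\le H$ and then taking the supremum over $y$ of $\partial_sh(s,y)+H(s,y,\partial_xh,\partial^2_{x^2}h)$ pointwise in $s$ gives, pathwise, the deterministic bound $V_2^h(t,x)-h(t,x)$. This bound is uniform in $\pi$ and $\omega$, so it also dominates the essential supremum, and therefore $V_1^h\le V_2^h$.

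\textbf{Attainment.} Take $h=V$. By hypothesis $V\in C^{1,2}$ and $V(T,\cdot)=g$, so $V\in\mathcal H$. Since $V$ is a classical solution, $\partial_sV+H(s,y,\partial_xV,\partial^2_{x^2}V)=0$ for all $(s,y)$. The supremum over $y$ in $V_2^V$ is therefore $0$, and $V_2^V(t,x)=V(t,x)$. Combined with the chain $V\le V_1^V\le V_2^V=V$, this forces $V=V_1^V$. So both infima are attained at $h=V$ and are minima.

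\textbf{Main obstacle.} The only delicate point is the martingale property of $M^{t,x,\pi,h}_{t,T}$ for a general $h\in\mathcal H$. If $h$ lacks growth bounds the stochastic integral may be merely a local martingale. I would first treat $h$ with bounded derivatives, or $h$ such that $\sigma\,\partial_xh$ is square integrable along trajectories. Otherwise, I would read the statement under the standing integrability assumptions implicit in ``$J$ and $V$ well-defined,'' restricting $\mathcal H$ to those $h$ for which $\mathbb E[M]=0$. Note that $V$ itself must belong to this class, and that is again an assumption of this kind.

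A secondary technicality is the behaviour at the endpoints, since $h$ is only $C^{1,2}$ on the open interval. I would apply It\^o on $[t+\varepsilon,T-\varepsilon]$ and pass to the limit using continuity up to $T$. Finally, the case where the supremum in $V_2^h$ equals $+\infty$ is harmless: the inequality $V_1^h\le V_2^h$ then holds trivially.
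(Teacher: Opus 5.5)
Your proof is correct and follows the paper's own argument: you get $V\le V_1^h\le V_2^h$ for every $h\in\mathcal H$ from It\^o's formula, the zero mean of the stochastic integral and the pointwise bound $H^{cv}\le H$, and equality at $h=V$ from the HJB equation. The integrability and endpoint caveats you raise are genuine, and the paper passes over them tacitly: it simply uses $\mathbb E[M^{t,x,\pi,h}_{t,T}]=0$ without comment.
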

\begin{proof}
Let $h \in \mathcal H$ and use Ito's formula:
\begin{align*}
V(t,x)&=\sup_{\pi \in {\mathcal U}} \mathbb E\left[\int_t^T l(s,X_s^{t,x,\pi},\pi_s) ds+h(T,X_T^{t,x,\pi}) \right]\\
&=h(t,x)+ \sup_{\pi \in {\mathcal U}}\mathbb E\left[  \int_t^T\partial_s h(s,X_s^{t,x,\pi}) + H^{cv}(s,X_s^{t,x,\pi},\partial_{x} h(s,X_s^{t,x,\pi}) ,\partial^2_{x^2} h(s,X_s^{t,x,\pi}),\pi_s )   ds\right]\\
&\leq V_1^h(t,x)\leq V_2^h(t,x) .
\end{align*}
Taking $h=V\in \mathcal H$ and using the HJB equation \eqref{eq:HJB}, we have
$$ V(t,x)=V_1^V(t,x)=V_2^V(t,x).$$
The claim follows.
\end{proof}
\begin{remark}\label{rem:sigma_depdendent_independent_control}
    Observe that when $\sigma=\sigma(s,x,u)$ depends on $u$, the dual stochastic problem might degenerate. Indeed,  one might have (see \cite[Remark 13]{diehl_friz_gassiat})
    \begin{align*}
&\operatorname*{esssup}_{\pi \in {\mathcal U}}{\int_t^T} \left[   \partial_s h(s,X_s^{t,x,\pi}) +H^{cv}(X_s^{t,x,\pi},\partial_{x} h(s,X_s^{t,x,\pi}) ,\partial^2_{x^2} h(s,X_s^{t,x,\pi}),\pi_s)\right]ds \\
&=\int_t^T  \sup_{y\in \mathbb R^d}\left[  \partial_s h(s,y) +  H(s,y,\partial_x h(s,y),\partial^2_{x^2} h(s,y)) \right]ds.
\end{align*}
In that case, the $V_1^h$ and $V_2^h$ obviously coincide. In general, however, it may be that $V_1^h$ gives a tighter bound than $V_2^h$, albeit at the cost of having to solve a pathwise optimal control problem as opposed to a pointwise optimization.
\end{remark}
\section{Non-smooth case}\label{sec_visco}
In this section, we  drop the assumption that $V\in C^{1,2}$, and we prove the following duality theorem. 

{We will use the following weaker assumption.} We recall that the set of classical supersolutions $\mathcal S^+$ was introduced in Definition \ref{def:classcal_sol}.
{\begin{hypothesis}\label{hp:Splus}
    Assume that $\mathcal S^+\neq \emptyset$ and \begin{equation*}
     \inf_{h \in S^+} h(t,x)\leq V(t,x).  
    \end{equation*}
\end{hypothesis}}
\begin{remark}
Assumption \ref{hp:Splus}  is 
proved,  under suitable regularity of the coefficients, in \cite{krylov2000}, \cite{barles_jakobsen},  \cite[Proof of Theorem 16]{diehl_friz_gassiat}, by constructing a sequence of smooth supersolutions $V_\epsilon$ converging to $V$, i.e.  
\begin{equation*}
     \inf_{h \in S^+} h(t,x)\leq V_\epsilon(t,x)\xrightarrow{\epsilon \to 0} V(t,x),   
    \end{equation*}
\end{remark}
\begin{theorem}\label{th:duality_viscosity}
{ Let Assumption \ref{hp:Splus} hold.}   Then:
    \begin{small}
\begin{align*}
V(t,x)&=\inf_{h \in \tilde{ \mathcal H}} \tilde{V}_1^h(t,x)=\inf_{h \in \tilde{ \mathcal H}} \tilde{V}_2^h(t,x),
\end{align*}
\end{small}
where $\tilde{\mathcal H}:= C^{1,2}( (0,T)\times \mathbb R^d)$ and
\begin{align*}
& \tilde{V}_1^h(t,x):=\mathbb E\Big[\operatorname*{esssup}_{\pi \in {\mathcal U}}\left ( \int_t^T l(s,X_s^{t,x,\pi},\pi_s) ds+g(X_T^{t,x,\pi})  - M_{t,T}^{t,x,\pi,h} \right)   \Big]\\
&  \equiv h(t,x) + \mathbb E\bigg[\operatorname*{esssup}_{\pi \in {\mathcal U}} \bigg ( \int_t^T\partial_s h(s,X_s^{t,x,\pi}) +H^{cv}(X_s^{t,x,\pi},\partial_{x} h(s,X_s^{t,x,\pi}) ,\partial^2_{x^2} h(s,X_s^{t,x,\pi}),\pi_s)    ds\\&
\quad \quad \quad \quad \quad \quad \quad \quad \quad \quad \quad \quad\quad + g(X_T^{t,x,\pi})- h(T,X_T^{t,x,\pi}) \bigg )  \bigg],\\
&\tilde{V}_2^h(t,x):=h(t,x) + \int_t^T \sup_{y\in \mathbb R^d}  \left[  \partial_s h(s,y) +  H(s,y,\partial_x h(s,y),\partial^2_{x^2} h(s,y)) \right] ds + \sup_{y\in \mathbb R^d} \left [g(y)-h(T,y) \right].
\end{align*}
\end{theorem}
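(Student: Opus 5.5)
The plan is to prove the two inequalities $V\le \tilde V_1^h\le \tilde V_2^h$ for every $h\in\tilde{\mathcal H}$, and then use Assumption \ref{hp:Splus} to show that the infimum of $\tilde V_2^h$ over $h$ is at most $V(t,x)$.

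\textbf{Lower bound.} Fix $h\in\tilde{\mathcal H}$ and $\pi\in\mathcal U$. Writing $M^{t,x,\pi,h}_{t,T}$ as the stochastic integral $\int_t^T\sigma\,\partial_x h\,dW_s$, we have
\[
\mathbb E\Big[\int_t^T l\,ds+g(X_T^{t,x,\pi})\Big]=\mathbb E\Big[\int_t^T l\,ds+g(X_T^{t,x,\pi})-M_{t,T}^{t,x,\pi,h}\Big],
\]
provided the stochastic integral has zero expectation. This is the same implicit integrability used in the proof of Theorem \ref{th:duality_C12}; if needed, one localizes with stopping times. The integrand on the right is bounded above by the essential supremum, so taking the supremum over $\pi$ gives $V(t,x)\le\tilde V_1^h(t,x)$. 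The identity between the two expressions for $\tilde V_1^h$ is just Itô's formula applied to $h(T,X_T)-h(t,x)$, now keeping the terminal mismatch $g-h(T,\cdot)$ because $h(T,\cdot)=g$ is no longer imposed. Next, pathwise and for each $\pi$, bound the integrand by
\[
\partial_sh+H^{cv}\le \sup_y\big[\partial_s h+H\big](s,y),
\]
and bound $g(X_T)-h(T,X_T)\le\sup_y[g(y)-h(T,y)]$. The resulting deterministic bound dominates the essential supremum, which gives $\tilde V_1^h\le \tilde V_2^h$.

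\textbf{Upper bound.} Take $h\in\mathcal S^+$, which is nonempty by Assumption \ref{hp:Splus}. The supersolution inequality gives $\partial_s h+H(s,y,\partial_xh,\partial^2_{x^2}h)\le 0$ for all $(s,y)\in(0,T)\times\mathbb R^d$, so the time integral in $\tilde V_2^h$ is $\le 0$. The terminal condition $h(T,y)\ge g(y)$ makes $\sup_y[g-h(T,\cdot)]\le 0$. Hence $\tilde V_2^h(t,x)\le h(t,x)$. Since $\mathcal S^+\subset\tilde{\mathcal H}$, this yields
\[
\inf_{\tilde{\mathcal H}}\tilde V_2^h\le\inf_{\mathcal S^+}h(t,x)\le V(t,x).
\]
Combining with the lower bound, we get $V=\inf\tilde V_1^h=\inf\tilde V_2^h$.

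\textbf{Main obstacle.} The delicate step is the martingale property of $M^{t,x,\pi,h}$ for a general $h\in C^{1,2}$, together with well-definedness of $\tilde V_1^h$. The continuity of $h$ on $[0,T]$, needed to evaluate $h(T,\cdot)$, also has to be addressed. For $\tilde V_1^h$ and $\tilde V_2^h$ to make sense, I would restrict attention to $h$ with enough growth control that the stochastic integral is a true martingale, or else use the paper's standing assumption that these quantities are well defined. If $h$ is such that $\tilde V_2^h=+\infty$, the inequalities hold trivially. The key upper bound only uses $h\in\mathcal S^+$, where $h\in C([0,T]\times\mathbb R^d)$ by Definition \ref{def:classcal_sol}.
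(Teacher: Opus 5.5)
Your proposal is correct and follows the paper's proof: Itô's formula gives $V\le\tilde V_1^h\le\tilde V_2^h$ for every $h\in\tilde{\mathcal H}$, and for $h\in\mathcal S^+$ you get $\tilde V_2^h(t,x)\le h(t,x)$, so Assumption~\ref{hp:Splus} closes the chain. You are more explicit than the paper on two points: why $\tilde V_2^h\le h(t,x)$ holds on $\mathcal S^+$, and the implicit assumptions that $M^{t,x,\pi,h}$ is a true martingale and that $h(T,\cdot)$ is defined. Note, however, that localization alone does not give $\mathbb E[M^{t,x,\pi,h}_{t,T}]=0$ without some integrability, so that step really rests on the standing well-posedness assumption.
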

\begin{proof}
Let $h\in \tilde{\mathcal H}$. Again, by   Ito's formula, we have
\begin{align*}
V(t,x)&=\sup_{\pi \in {\mathcal U}} \mathbb E\left[\int_t^T l(s,X_s^{t,x,\pi},\pi_s) ds+h(T,X_T^{t,x,\pi}) + g(X_T^{t,x,\pi})- h(T,X_T^{t,x,\pi}) \right]\\
&=h(t,x)+ \sup_{\pi \in {\mathcal U}}\mathbb E\bigg[  \int_t^T\partial_s h(s,X_s^{t,x,\pi}) + H^{cv}(s,X_s^{t,x,\pi},\partial_{x} h(s,X_s^{t,x,\pi}) ,\partial^2_{x^2} h(s,X_s^{t,x,\pi}),\pi_s )   ds\\
&\quad \quad \quad\quad \quad \quad\quad \quad  + g(X_T^{t,x,\pi})- h(T,X_T^{t,x,\pi})  \bigg]\\
&\leq  \tilde{V}_1^h(t,x)\leq  \tilde{V}_2^h(t,x).
\end{align*}
Taking the infimum over $h\in \tilde{\mathcal H}$, recalling that $\mathcal S^+\subset \tilde{ \mathcal H}$ is the set of classical supersolution of the HJB equation, and using Assumption \ref{hp:Splus}, we have
\begin{align*}
V(t,x)&\leq \inf_{h \in \tilde{ \mathcal H}} \tilde{V}_1^h(t,x)\leq \inf_{h \in \tilde{ \mathcal H}} \tilde{V}_2^h(t,x) \leq  \inf_{h \in \mathcal S^+} h(t,x)\leq V(t,x) .
\end{align*}
The claim follows.
\end{proof}
Clearly, Remark \ref{rem:sigma_depdendent_independent_control} applies also here.
\paragraph{\textbf{Acknowledgments.}}
The authors acknowledge funding by the Deutsche Forschungsgemeinschaft (DFG, German Research Foundation) – CRC/TRR 388 "Rough Analysis, Stochastic Dynamics and Related Fields" – Project ID 516748464.
\appendix
\bibliography{refs-MLduality}
\bibliographystyle{plain}
\end{document}